\definecolor{mylinkcolor}{rgb}{0.8,0,0}
\definecolor{myurlcolor}{rgb}{0,0,0.8}
\definecolor{mycitecolor}{rgb}{0,0,0.8}
\newtheorem{defn}{Definition}[section]
\newtheorem{lemma}[defn]{Lemma}
\newtheorem{theorem}[defn]{Theorem}
\newtheorem{proposition}[defn]{Proposition}
\theoremstyle{definition}
\newtheorem{remark}[defn]{Remark}
\newtheorem*{acknowledgments}{Acknowledgments}
\newcommand{\Q}{\mathbb Q}
\newcommand{\Z}{\mathbb Z}
\newcommand{\F}{\mathbb F}
\newcommand{\PP}{\mathbb P}
\newcommand{\rk}{\operatorname{rk}}
\newcommand{\Frob}{\operatorname{Frob}}
\newcommand{\Gal}{\operatorname{Gal}}
\newcommand\suchthat{%
 \@ifstar
  {\mathrel{}\middle|\mathrel{}}
  {\mid}%
}
\def\diam#1{\langle#1\rangle}
\begin{document}
\bibliographystyle{plain}
\title{Tamagawa Numbers of elliptic curves with $C_{13}$ torsion over quadratic fields}

\author{Filip Najman}
\address{Department of Mathematics\\ University of Zagreb\\ Bijeni\v cka cesta 30\\ 10000 Zagreb\\ Croatia}
\email{fnajman@math.hr}

\thanks{The author gratefully acknowledges support from the QuantiXLie Center of Excellence.}

\begin{abstract}
Let $E$ be an elliptic curve over a number field $K$, $c_v$ the Tamagawa number of $E$ at $v$, and let $c_E=\prod_{v}c_v$. Lorenzini proved that $v_{13}(c_E)$ is positive for all elliptic curves over quadratic fields with a point of order $13$. Krumm conjectured, based on extensive computation, that the $13$-adic valuation of $c_E$ is even for all such elliptic curves. In this note we prove this conjecture and furthermore prove that there is a unique such curve satisfying $v_{13}(c_E)=2$.
\end{abstract}

\maketitle

\section{Introduction}

Let $K$ be a number field and $E$ an elliptic curve defined over $K$. For every finite prime $v$ of $K$, denote by $K_v$ the completion of $K$ at $v$ and by $k_v$ the residue field of $v$. The subgroup $E_0(K_v)$ of $E(K_v)$ consisting of points that reduce to nonsingular points in $E(k_v)$ has finite index $E(K_v)$ and one defines the \textit{Tamagawa number} of $E$ at $v$ to be this index $c_v:=[E(K_v):E_0(K_v)]$.

We define $c_E$ to be
$$c_{E/K}:=\prod_{v}c_v.$$
It will always be clear from the context which number field $K$ we are working over, so for brevity's sake, we will write $c_E$ instead of $c_{E/K}$.

Because the ratio $c_E/\#E(K)_{tors}$ appears as a factor in the leading term of the $L$-function of $E$, by the Birch-Swinnerton--Dyer conjecture, it is natural to study how the value of $c_E$ depends on $E(K)_{tors}$. Many results describing how the value $c_E$ depends on $E(K)_{tors}$ have been obtained by Lorenzini \cite{lor} for elliptic curves over $\Q$ and quadratic fields and by Krumm in his PhD thesis \cite[Chapter 5]{kru} for number fields of degree up to $4$.

Let us give a short explanation of how $c_E$ can depend on $E(K)_{tors}$. Suppose for simplicity that $N=\#E(K)_{tors}$ is prime. Let $E_1(K_v)$ be the subgroup of $E(K_v)$ of points which reduce to the point at infinity in $E(k_v)$ and let $E_{ns}(k_v)$ be the group of nonsingular points in $E(k_v)$.  There exists an exact sequence of abelian groups
$$0\longrightarrow E_1(K_v) \longrightarrow E_0(K_v) \longrightarrow E_{ns}(k_v) \longrightarrow 0.$$
If $v$ does not divide $N$, then there are no points of order $N$ in $E_1(K_v)$, as $E_1(K_v)$ is isomorphic to the formal group of $E$. If $v$ is also small enough such that there cannot be any points of order $N$ in $E_{ns}(k_v)$, due to the Hasse bound, then it follows that $E_0(K_v)$ does not have a point of order $N$. It then follows, by definition, that $N$ has to divide $c_v$.

Throughout the paper $C_n$ will denote a cyclic group of order $n$. Using the argument above, Krumm showed (taking $v$ to be a prime above $2$ and $N=13$) that for all elliptic curves $E$ over all quadratic fields $K$ with $E(K)_{tors}\simeq C_{13}$, the value $c_E$ is divisible by $169$ \cite[Proposition 5.3.4.]{kru}. Furthermore, he conjectured that for all elliptic curves with torsion $C_{13}$ over quadratic fields the value $v_{13}(c_E)$ is even \cite[Conjecture 5.3.6.]{kru}. The conjecture is true for the 48925 such elliptic curves that he tested.

In this note we prove this conjecture. More explicitly, we prove the following theorem.
\begin{theorem}
\label{main_tm}
Let $E$ be an elliptic curve over a quadratic field $K$ with $E(K)_{tors}\simeq C_{13}$. Then $v_{13}(c_E)$ is a positive even integer.
\end{theorem}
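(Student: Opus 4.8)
The plan is to exploit the modular curve $X_1(13)$, which has genus $2$ and is hyperelliptic, together with the classification of Tamagawa numbers by reduction type. Write $\sigma$ for the generator of $\Gal(K/\Q)$. First I would use that the Jacobian $J_1(13)$ has finite Mordell--Weil group over $\Q$, so that every non-cuspidal quadratic point on $X_1(13)$ arises from the hyperelliptic map; concretely, if $Q=(E,P)$ is the point attached to our pair, then $Q^\sigma=\iota(Q)$, where $\iota$ is the hyperelliptic involution. Since $5^2\equiv -1\pmod{13}$, the operator $\langle 5\rangle$ is the unique involution among the diamonds, and it coincides with $\iota$; as $\langle 5\rangle$ sends $(E,P)$ to $(E,5P)$ and in particular fixes the underlying curve, this yields an isomorphism $E^\sigma\simeq E$ defined over $K$. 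Two consequences are recorded: $j_E\in\Q$, and $c_v=c_{v^\sigma}$ for every finite prime $v$ of $K$.

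Next I would reduce the problem to a sum over primes of split multiplicative reduction. Inspecting Kodaira's list, $13\mid c_v$ forces $E$ to have split multiplicative reduction of type $I_{n}$ with $13\mid n$, and then $v_{13}(c_v)=v_{13}(n)=v_{13}\bigl(-v(j_E)\bigr)$; every additive or non-split fibre contributes a Tamagawa number in $\{1,2,3,4\}$, hence nothing $13$-adically. Writing $p$ for the rational prime below $v$ and $e_v\in\{1,2\}$ for the ramification index, one has $-v(j_E)=-e_v\operatorname{ord}_p(j_E)$, so that $v_{13}(c_v)=v_{13}\bigl(\operatorname{ord}_p(j_E)\bigr)$ depends only on $p$. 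Grouping the primes of $K$ over each rational prime, a pair $\{v,v^\sigma\}$ lying over a split prime contributes $2\,v_{13}(\operatorname{ord}_p(j_E))$ by the equality $c_v=c_{v^\sigma}$ from the previous step, which is even.

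It remains to treat the primes that are inert or ramified in $K$, and this is the main obstacle. Using the explicit hyperelliptic model $Y^2=f(X)$ of $X_1(13)$, I would argue that $\operatorname{ord}_p(j_E)<0$ with $13\mid\operatorname{ord}_p(j_E)$ can occur only when the rational $X$-coordinate $X_0$ of $Q$ reduces modulo $p$ to a cusp: the poles of $j$ on the $X$-line lie precisely at the cusps, and only the cusp of width $13$ yields a pole order divisible by $13$. At that cusp $f$ takes a value whose $p$-adic valuation is even and whose unit part is a square (for the standard model both the leading and the constant coefficient of $f$ equal $1$), so $f(X_0)\in(\Q_p^{\ast})^2$ and $p$ splits in $K=\Q(\sqrt{f(X_0)})$. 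Consequently no inert or ramified prime contributes to $v_{13}(c_E)$, and the formula $v_{13}(c_E)=\sum_{p\ \mathrm{split}}2\,v_{13}(\operatorname{ord}_p(j_E))$ shows the valuation is even; positivity is Krumm's divisibility $169\mid c_E$. The delicate points I expect to absorb most of the work are the exact cusp analysis on the hyperelliptic model, the possible sporadic quadratic points, the separate treatment of $p=2$ and $p=13$ and of the finitely many primes of bad reduction of the chosen model, and the verification that the potentially multiplicative reduction at the relevant split primes is genuinely multiplicative rather than additive.

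For the final assertion I would note that the width-$13$ cusp forces $\operatorname{ord}_p(j_E)=-13a$ with $a\ge 1$, so each contributing split prime gives $2\bigl(1+v_{13}(a)\bigr)\ge 2$; thus $v_{13}(c_E)=2$ forces exactly one such prime, with $13\nmid a$, and no further contributions. I would then pin down the unique corresponding curve in the family by a finite determination of the admissible $X_0$.
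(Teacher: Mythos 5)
Your overall skeleton matches the paper's proof in its two pillars: (i) from $Q^\sigma=\iota(Q)$ one gets $E^\sigma\simeq E$ over $K$ and hence $c_v=c_{v^\sigma}$ for all $v$ (the paper's Lemma~\ref{lem_hyp} and Proposition~\ref{prop_gl}), and (ii) every prime $v$ with $13\mid c_v$ lies over a rational prime that splits in $K$ (the paper's Proposition~\ref{prop1}), so the $13$-adic contributions pair up and the total valuation is even, positivity being Lorenzini's divisibility. For the key step (ii), however, you take a genuinely different route. The paper shows that $x$ and $x^\sigma$ reduce to the reduction of a single $\F_p$-rational cusp $C$ (Lemma~\ref{rat_cusps}), then uses $\rk J(\Q(\zeta_{13})^+)=0$ (Lemma~\ref{rnk_J13}, a Magma $2$-descent), injectivity of reduction on torsion, and uniqueness of the degree-$2$ map to force $\iota(C)=C$, a contradiction. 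You instead work on the quotient line: $X_0=X(Q)\in\Q$, $K=\Q(\sqrt{f(X_0)})$ with $f$ the sextic of \eqref{jed_s}, and you argue that reduction of $X_0$ to a rational cusp image $0,1,\infty$ forces $f(X_0)\in(\Q_p^\ast)^2$, hence $p$ split. For those three cusps and odd $p$ this is correct ($f(0)=f(1)=1$ and $f$ is monic of even degree), and if completed it would be more elementary than the paper's argument, avoiding the rank computation over $\Q(\zeta_{13})^+$ entirely.

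There is, however, a genuine gap, and it sits exactly where the paper has to work hardest. Your claim that ``only the cusp of width $13$ yields a pole order divisible by $13$'' is false: what must be divisible by $13$ is $-\operatorname{ord}_p(j_E)$, which equals the pole order of $j$ at the cusp image \emph{times} the $p$-adic proximity $m$ of $X_0$ to that image. So $X_0$ lying $p$-adically close, with $13\mid m$, to one of the three width-$1$ cusp images (the N\'eron $1$-gon cusps, whose $X$-coordinates are the roots of $t^3-4t^2+t+1$ and generate the cubic subfield $F$ of $\Q(\zeta_{13})^+$) also gives $13\mid\operatorname{ord}_p(j_E)$. In moduli terms this is the case where the point of order $13$ reduces into the identity component of the N\'eron model, which is possible whenever $13\mid\#k_v^\times$, i.e. $p\equiv\pm1\pmod{13}$, and nothing in your argument prevents such a $p$ from being inert or ramified in $K$. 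Moreover your square-value argument cannot be transported to those cusps: at a width-$1$ cusp image $c$ one has $F(\sqrt{f(c)})=\Q(\zeta_{13})^+=F(\sqrt{13})$, so $f(c)$ is $13$ times a square in $F$ and is emphatically \emph{not} a square. The case can probably be patched --- for $p\equiv\pm1\pmod{13}$, quadratic reciprocity with $13\equiv1\pmod 4$ gives $\left(\frac{13}{p}\right)=1$, so $f(X_0)\equiv 13\cdot(\text{square})$ is again a $p$-adic square away from finitely many bad primes --- but this argument is missing, and it is precisely the dichotomy $p\equiv\pm1$ versus $p\not\equiv\pm1\pmod{13}$ that the paper's Lemma~\ref{rat_cusps} is built to handle. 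Separately, $p=2$ and $p=13$ really do need the ad hoc treatments you defer: the paper disposes of $p=13$ by an explicit computation with the family $E_t$ in Case~2 of Proposition~\ref{prop1}, and that computation (or a substitute) must appear in a complete proof.
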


Finally, in Theorem \ref{zavrsni_tm} we show that there is a unique elliptic curve $E$ over any quadratic field $K$ such that $v_{13}(c_E)=2$.

\section{Elliptic curves with $C_{13}$ torsion over quadratic fields}

As we are looking at elliptic curves with $C_{13}$ torsion, we are naturally led to studying the modular curve $X:=X_1(13)$. The $K$-rational points on the modular curve $Y_1(13)$ correspond to isomorphism classes of pairs $(E,P)$, where $E/K$ is an elliptic curve and $P\in E(K)$ is a point of order $13$. The compactification $X_1(13)$ of $Y_1(13)$ has genus~2 and in particular is hyperelliptic. There are six rational cusps on $X_1(13)$, representing N\'eron $13$-gons, and six cusps with field of definition $\Q(\zeta_{13})^+=\Q(\zeta_{13}+\zeta_{13}^{-1})$, the maximal real subfield of $\Q(\zeta_{13})$, representing N\'eron 1-gons. For more on the moduli interpretation of the cusps of $X_1(n)$ see \cite[Chapter II]{dr} or \cite[Section 9]{di}. The diamond automorphism
$\iota:=\diam{5}=\diam{-5}$ which acts as $i((E,\pm P))=(E, \pm 5P)$ is the hyperelliptic involution; we note that the fixed points of~$\iota$ lie outside the cusps.

The $\Q$-rational points on $X$ are all cusps. Both $X$ and $J$ have bad reduction only at 13. For a prime $v$ of $\Q(\zeta_{13})^+$ not dividing 13, the cusps in $X(\Q(\zeta_{13})^+)$ reduce bijectively to the cusps of $\widetilde{X}(\overline{k_v})$, where $\widetilde{X}$ is the reduction of $X$ modulo $v$.

Reduction mod $v$ is injective on $J(\Q(\zeta_{13})^+)_{tors}$ for all primes $v$ of $\Q(\zeta_{13})^+$ not dividing 13 - this follows from \cite[Appendix]{kat} for $v\nmid 2$. For $v\mid 2$, injectivity follows from the fact that $J(\Q(\zeta_{13})^+)$ has no 2-torsion (this can easily be checked in Magma \cite{magma}).

The Jacobian $J:=J_1(13)$ has rank 0 over $\Q$ and $J(\Q)\simeq \Z / 19 \Z$ - this fact was originally proved by Mazur and Tate in \cite{mt}. We will need the rank of $J$ over $\Q(\zeta_{13})^+$.

\begin{lemma} \label{rnk_J13}
The rank of $J(\Q(\zeta_{13})^+)$ is $0$.
\end{lemma}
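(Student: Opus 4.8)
The plan is to exploit that $L:=\Q(\zeta_{13})^+$ is an abelian extension of $\Q$, so that the Mordell--Weil group of $J$ over $L$ breaks up according to the characters of $G:=\Gal(L/\Q)$. Since $\Gal(\Q(\zeta_{13})/\Q)\cong(\Z/13\Z)^\ast$ is cyclic of order $12$ and $L$ is its index-$2$ (maximal real) subfield, $G$ is cyclic of order $6$. Now $J=J_1(13)$ is a modular abelian variety of $\GL_2$-type defined over $\Q$ (the abelian surface attached to the single Galois orbit of weight-$2$ newforms of level $13$, whose nebentypus has order $6$ and whose coefficient field is $\Q(\zeta_6)$). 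Hence $J(L)\otimes_\Z\Q$ is a $\Q[G]$-module, and it decomposes into isotypic pieces indexed by the characters $\chi$ of $G$, so that
$$\rank J(L)=\sum_{\chi\in\widehat{G}}\dim_\CC\big(J(L)\otimes_\Z\CC\big)^{\chi},$$
where the $\chi$-part is governed by the twist of $J$ by $\chi$.

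The next step is to bound each isotypic piece analytically. By a theorem of Kato, applied to the $\GL_2$-type variety $J$ together with the Dirichlet characters $\chi$ cutting out $L$ (all ramified only at $13$), the $\chi$-component of $J(L)$ is finite whenever the twisted $L$-value $L(J\otimes\chi,1)$ is nonzero; equivalently, the algebraic rank of each piece is at most the order of vanishing at $s=1$ of the corresponding (product of) twisted newform $L$-function(s). The trivial character contributes the already-known value $\rank J(\Q)=0$, so it suffices to establish that $L(J\otimes\chi,1)\neq0$ for each of the remaining five characters $\chi$ of $G$.

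For this nonvanishing I would first invoke Rohrlich's theorem, which guarantees that $L(J\otimes\chi,1)\neq0$ for all but finitely many Dirichlet characters $\chi$ of $13$-power conductor, and then settle the finitely many remaining cases—our six characters in particular—by direct evaluation of the special values using modular symbols (for instance in Magma). The principal obstacle, and the step requiring the most care, is making this last verification rigorous: one must match each twist $J\otimes\chi$ with a genuine modular form of level dividing $13^2$, correctly accounting for the order-$6$ nebentypus and the coefficient field $\Q(\zeta_6)$, and then certify that each computed $L$-value is provably nonzero rather than merely numerically small. Once all six twisted $L$-values are confirmed nonvanishing, Kato's bound forces $J(L)\otimes_\Z\Q=0$, i.e. $\rank J(L)=0$.
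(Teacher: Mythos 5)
Your strategy is sound and is a genuinely different route from the paper's. The paper stays entirely on the algebraic side: it writes $L=\Q(\zeta_{13})^+$ as the quadratic extension $F(\sqrt{13})$ of its cubic subfield $F$, uses the quadratic-twist rank formula $\rk J(L)=\rk J(F)+\rk J^{13}(F)$, and then runs an explicit $2$-descent (Magma's \texttt{RankBound}) on the two genus-$2$ Jacobians over $F$; the only character decomposition it exploits is the single quadratic character of $\Gal(L/F)$. You instead decompose $J(L)\otimes_{\Z}\Q$ under all six characters of the cyclic group $\Gal(L/\Q)$ (equivalently, the even Dirichlet characters mod $13$) and bound each isotypic piece by Kato's Euler-system theorem; your identification of $J_1(13)$ as the $\GL_2$-type surface attached to the level-$13$ newform orbit with order-$6$ nebentypus and Hecke field $\Q(\zeta_6)$ is accurate, and the reduction of the lemma to nonvanishing of $L(f,\chi,1)$ and $L(\bar f,\chi,1)$ for the five nontrivial $\chi$ is valid. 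Comparing the two: the descent argument is purely algebraic but could in principle be obstructed by $2$-torsion in the Shafarevich--Tate group (in which case \texttt{RankBound} returns only a nonzero upper bound), while your analytic route is unconditional given Kato and scales better to larger levels --- it is essentially the method used in the literature for analogous rank-zero statements --- but it is one-directional and would fail if some twisted $L$-value happened to vanish despite the rank being $0$ (here none does). Two remarks on your write-up: first, the appeal to Rohrlich's theorem does no work, since it never identifies the exceptional characters, so you must compute all five twists regardless; second, the obstacle you flag (matching $J\otimes\chi$ with a newform of level dividing $13^2$ and certifying nonvanishing) is easier than you fear, because $L(f,\chi,1)$ is, up to a nonzero Gauss-sum factor, the finite sum $\sum_{a \bmod 13}\bar\chi(a)\lambda(a/13)$ of modular symbols of $f$ itself, an exact algebraic quantity; no identification of $f\otimes\chi$ as a newform is needed. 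Until that finite exact computation is actually carried out, however, your argument is a strategy rather than a proof --- both proofs are computer-assisted, but the paper executed its computation.
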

\begin{proof}
We use $2$-descent, as implemented in the \texttt{RankBound()} function in Magma \cite{magma}, to prove this Lemma. As Magma is unable, in reasonable time, to perform a 2-descent on $J$ directly over $\Q(\zeta_{13})^+$, we do the following.

Let $F$ be the cubic subfield of $\Q(\zeta_{13})^+$; then $[\Q(\zeta_{13})^+:F]=2$ and $\Q(\zeta_{13})^+=F(\sqrt{13})$. Thus we have that $\rk J(\Q(\zeta_{13})^+) = \rk J(F) + \rk J^{13}(F)$, where $J^{13}$ denotes the Jacobian of the quadratic twist of $X_1(13)$ by $13$. Magma computes $\rk J(F)=\rk J^{13}(F)=0$, proving the claim.
\end{proof}

\begin{remark}
See \cite{kn} for a different proof, using the fact that $X_1(13)$ is bielliptic, of the fact that rank of $J(\Q(\zeta_{13})^+)$ is $0$.
\end{remark}

From \cite{bbdn,rab} it follows that all elliptic curves with $C_{13}$ torsion over quadratic fields are of the form
\begin{equation}\label{jed_ek}
 E_t: y^2+axy+cy=x^3+bx^2,
\end{equation}
where
\begin{equation}\label{jed_ek2}
\begin{aligned}
a&=\frac{(t-1)^2(t^2+t-1)s-t^7+2t^6+3t^5-2t^4-5t^3+9t^2-5t+1}{2},\\
b&=\frac{t(t-1)^2((t^5+2t^4-5t^2+4t-1)s-t^8-t^7+4t^6+2t^5+t^4-13t^3+14t^2-6t+1)}{2},\\
c&=t^5b,
\end{aligned}
\end{equation}
for some $t \in \Q$, and where
\begin{equation}\label{jed_s}
s=\sqrt{t^6-2t^5 +t^4-2t^3+6t^2-4t+1}.
\end{equation}
The curve $E_t$ is defined over $\Q(s)$.

We will need the following two lemmas.
\begin{lemma}
\label{lem_hyp}
If $(E,P)=x\in X(K)$ is a non-cuspidal point, where $K$ is a quadratic field. Then $\iota(x)=x^\sigma$, where $\sigma$ is the generator of $\Gal(K/\Q)$. In particular $E$ is isomorphic over $K$ to $E^\sigma$.
\end{lemma}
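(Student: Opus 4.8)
The plan is to exploit the fact that $X = X_1(13)$ is hyperelliptic with hyperelliptic involution equal to the diamond operator $\iota = \diam{5}$. Let $x = (E,P) \in X(K)$ be a non-cuspidal point over a quadratic field $K$, and let $\sigma$ be the generator of $\Gal(K/\Q)$. First I would observe that since $x$ has degree $2$ over $\Q$, the Galois conjugate $x^\sigma$ is also a point of $X$, and the effective degree-$2$ divisor $x + x^\sigma$ is defined over $\Q$, hence is a $\Q$-rational point of the symmetric square $\Sym^2 X$. The key is to compare this divisor class with the hyperelliptic class.

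The main step is to argue that $x + x^\sigma$ must be linearly equivalent to the hyperelliptic divisor class $g^1_2$, so that $x^\sigma = \iota(x)$. The reason is the following: any $\Q$-rational effective degree-$2$ divisor on $X$ gives a point on $\Sym^2 X$, and the Abel--Jacobi map sends its class minus a fixed rational base divisor (the hyperelliptic class $g^1_2$, which is rational since the six rational cusps provide rational divisors in this class) into $J(\Q)$. By Lemma~\ref{rnk_J13} together with the cited Mazur--Tate computation, $J(\Q) \simeq \Z/19\Z$ is finite; more to the point, the image of $\Sym^2 X(\Q)$ under this map, away from the hyperelliptic class, is controlled by $J(\Q)_{\tor}$. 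I would then invoke the standard fact for a genus-$2$ hyperelliptic curve that the fibers of $\Sym^2 X \to J$ are single points except over the class of $g^1_2$, whose fiber is the $\PP^1$ of the hyperelliptic linear system. Thus if the class of $x + x^\sigma - g^1_2$ were a nonzero torsion point of $J(\Q)$, it would have to be represented by a \emph{unique} effective divisor, forcing $x = x^\sigma$, i.e.\ $x \in X(\Q)$; but then $x$ would be a $\Q$-rational non-cuspidal point, contradicting that all rational points of $X$ are cusps. Therefore the class is trivial, $x + x^\sigma \sim g^1_2$, and since $g^1_2$ is cut out precisely by the pairs $\{y, \iota(y)\}$, we conclude $x^\sigma = \iota(x)$.

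The hardest part will be ensuring the dichotomy in the previous paragraph is airtight: I must rule out the possibility that $x + x^\sigma - g^1_2$ is a nonzero element of $J(\Q)$ represented by an effective divisor with $x \ne x^\sigma$. This is exactly where genus $2$ is essential, since for such curves every degree-$0$ class has a \emph{unique} effective representative of degree $2$ \emph{unless} the class is the canonical (hyperelliptic) class, whose representatives form the $g^1_2$ pencil. So a nontrivial class would force $x + x^\sigma$ to be the unique effective divisor in its class, and Galois-stability of that class would then force $x = x^\sigma$. Combined with the fact that $X(\Q)$ consists only of cusps, this eliminates all nonzero classes, leaving only the trivial one.

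Finally, once $x^\sigma = \iota(x)$ is established, the moduli interpretation gives the last assertion directly: writing $x = (E, \pm P)$, we have $x^\sigma = (E^\sigma, \pm P^\sigma)$ on the one hand and $\iota(x) = (E, \pm 5P)$ on the other, so $(E^\sigma, \pm P^\sigma) \cong (E, \pm 5P)$ as pairs. In particular the underlying elliptic curves agree, giving $E^\sigma \cong E$ over $K$, which is the final claim of the lemma.
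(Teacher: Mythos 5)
Your overall strategy is the standard one (the paper itself gives no argument, citing \cite[Chapter 4]{bbdn} and \cite[Theorem 2.6.9.]{kru} instead): form the rational class $d:=[x+x^\sigma-g^1_2]$, note it lies in the finite group $J(\Q)\simeq\Z/19\Z$, and use the genus-$2$ Riemann--Roch fact that a degree-$2$ class other than the canonical one has at most one effective representative. But the step you yourself flag as the hardest is a non sequitur. If $d\neq 0$, uniqueness tells you only that $x+x^\sigma$ is the \emph{sole} effective divisor in its class; since $x+x^\sigma$ is already Galois-stable as a divisor ($\sigma$ swaps its two points), this is perfectly self-consistent and yields no contradiction. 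Galois-stability of a divisor means the support is \emph{permuted} by Galois, not that each point of the support is rational, so nothing forces $x=x^\sigma$. Indeed, on a general genus-$2$ curve with rank-zero Jacobian, ``exceptional'' quadratic points not interchanged by the hyperelliptic involution do occur (for instance via a bielliptic quotient), so no purely formal argument from rank $0$ plus Riemann--Roch can close this gap; and for $X_1(13)$ specifically, every nonzero class of $J(\Q)$ \emph{is} represented by an effective degree-$2$ divisor, so you also cannot hope to rule out representability of $d$.

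What is missing is a step involving the cusps, and it can be done by pure counting with the facts the paper records: the six cusps in $X(\Q)$ are permuted by $\iota$ without fixed points. They support exactly $\binom{6}{2}+6=21$ rational effective degree-$2$ divisors, of which exactly the three divisors $C+\iota(C)$ lie in the canonical (hyperelliptic) class. The remaining $18$ are pairwise distinct effective divisors in non-canonical classes, so by the uniqueness you invoked they define $18$ pairwise distinct \emph{nonzero} elements $[C_i+C_j-g^1_2]$ of $J(\Q)\simeq\Z/19\Z$ --- that is, \emph{all} nonzero elements, since there are only $18$ of them. Now your argument closes correctly: if $d\neq 0$, then $x+x^\sigma$ lies in the same non-canonical class as some cuspidal divisor $C_i+C_j$, and uniqueness of the effective representative forces $x+x^\sigma=C_i+C_j$, contradicting that $x$ is non-cuspidal. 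Hence $d=0$, so $x+x^\sigma\sim g^1_2$, and since the effective divisors in the canonical class are exactly the fibers $\{y,\iota(y)\}$ of the hyperelliptic map, $x^\sigma=\iota(x)$. Your final paragraph, deducing $E^\sigma\simeq E$ from the moduli interpretation, is fine as written.
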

\begin{proof}
See \cite[Chapter 4]{bbdn} or \cite[Theorem 2.6.9.]{kru}.
\end{proof}

\begin{lemma} \label{lem_mult_red}Let $E$ be an elliptic curve over a quadratic field $K$ with $E(K)_{tors}\simeq C_{13}$. Let $v$ be a prime such that $13$ divides $c_{v}$. Then $E$ has split multiplicative reduction at $v$.
\end{lemma}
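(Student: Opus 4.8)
The plan is to reduce the statement to a case analysis over the Kodaira reduction type of $E$ at $v$, using the standard fact that the Tamagawa number $c_v$ equals the order of the group of $k_v$-rational connected components of the special fibre of the N\'eron model of $E$ at $v$. The crucial quantitative observation is that, among all reduction types, only split multiplicative reduction can produce a Tamagawa number divisible by a prime larger than $4$, and $13$ is such a prime; everything else is bookkeeping against the Kodaira--N\'eron classification (equivalently, Tate's algorithm).

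First I would recall the possible values of $c_v$ type by type. If $E$ has good reduction at $v$ then $c_v=1$. If $E$ has additive reduction, then the geometric component group has order at most $4$: it is trivial for types $II$ and $II^*$, cyclic of order $2$ for $III$ and $III^*$, cyclic of order $3$ for $IV$ and $IV^*$, and of order at most $4$ for $I_0^*$ and $I_n^*$. Since $c_v$ is the order of the subgroup of $k_v$-rational components, in all additive cases $c_v\le 4$. Finally, if $E$ has non-split multiplicative reduction of type $I_n$, Frobenius acts on the geometric component group $\Z/n\Z$ by inversion, so $c_v=\#(\Z/n\Z)^{\Frob}\in\{1,2\}$.

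In each of these cases $c_v$ is coprime to $13$, so none of them is compatible with the hypothesis $13\mid c_v$. The only remaining possibility is that $E$ has split multiplicative reduction at $v$, of some type $I_n$, in which case every component is $k_v$-rational and $c_v=n$; this is consistent with $13\mid c_v$ exactly when $13\mid n$. Hence $13\mid c_v$ forces split multiplicative reduction at $v$, as claimed.

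I do not expect a genuine obstacle here: the argument is a direct consequence of the Kodaira--N\'eron classification and needs only the single numerical input that the additive Kodaira types have component groups of order at most $4$. It is worth noting that the hypotheses that $K$ is quadratic and that $E(K)_{tors}\simeq C_{13}$ play no role in this reduction-type argument; they merely fix the setting in which the lemma will subsequently be applied.
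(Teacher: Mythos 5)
Your proposal is correct and is essentially the paper's argument: the paper proves this lemma simply by citing the standard fact (Silverman, Corollary C.15.2.1) that $c_v\le 4$ unless $E$ has split multiplicative reduction, and your case analysis over the Kodaira--N\'eron types is precisely the proof of that cited fact. Your closing observation that the quadratic-field and $C_{13}$ hypotheses are irrelevant here is also consistent with the paper's treatment, since the result it invokes holds for any elliptic curve over any local field.
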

\begin{proof}
This is well known, see for example \cite[Corollary C.15.2.1. p.447.]{sil}.
\end{proof}

\begin{lemma} Let $(E,P)=x\in X(K)$, let $v$ be a prime of $K$ such that $v\nmid 13$ and $13|c_{v}$, let $p$ be the rational prime below $v$ and let $v'$ be a prime of $\Q(\zeta_{13})^+$ above $p$. Then $x$ mod $v$ is equal to $C$ mod $v'$ for a cusp $C\in X(\Q(\zeta_{13})^+)$ such that $C$ mod $v'$ is $\F_p$-rational.
\label{rat_cusps}
\end{lemma}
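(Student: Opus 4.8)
The plan is to reduce the whole point $x=(E,P)$ modulo $v$ on the good model of $X=X_1(13)$ at $p$ (recall $X$ has good reduction away from $13$) and to show that it lands on an $\F_p$-rational cusp. Write $\tilde x:=x\bmod v$. My first step is to observe that $\tilde x$ is a cusp: by Lemma~\ref{lem_mult_red} the curve $E$ has split multiplicative reduction at $v$, so $v(j_E)<0$ and the $j$-invariant reduces to $\infty$. Under the forgetful map $X\to X(1)\cong\PP^1$, $(E,P)\mapsto j(E)$, the fibre over $j=\infty$ consists precisely of the cusps, while a non-cuspidal point reduces to a pair $(\widetilde E,\widetilde P)$ with $\widetilde E$ an elliptic curve of finite $j$. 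Hence $\tilde x$ lies in the cuspidal locus of $\widetilde X$.

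The heart of the matter is to prove that $v$ is \emph{not inert} in $K$. Granting this, the residue degree of $v$ is $1$, so $k_v=\F_p$ and $\tilde x\in\widetilde X(\F_p)$. Since all twelve cusps are defined over $\Q(\zeta_{13})^+$ and, as recalled in the text, reduction is injective on them, there is a unique cusp $C\in X(\Q(\zeta_{13})^+)$ with $C\bmod v'=\tilde x$; this reduction equals the $\F_p$-rational point $\tilde x$, which is exactly the assertion. So everything comes down to excluding the inert case.

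To that end I would argue by contradiction: suppose $v$ is inert, so $k_v=\F_{p^2}$, the extension $K_v/\Q_p$ is unramified quadratic, and the generator $\sigma$ of $\Gal(K/\Q)$ induces $\Frob_p$ on $k_v$. Writing $\tilde x=\widetilde C$ for the cusp $C\in X(\Q(\zeta_{13})^+)$ realizing it, Lemma~\ref{lem_hyp} gives $\iota(x)=x^\sigma$; reducing modulo $v$, using Galois-equivariance of reduction and its injectivity on cusps, this becomes the identity of cusps
$$\iota(C)=C^{\Frob_p}.$$
I then show $\Frob_p$ fixes $C$, which contradicts the fact that $\iota$ has no fixed cusp. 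If $C$ is one of the six rational cusps this is immediate. Otherwise $C$ is a $1$-gon cusp, so in the reduction $\bar P$ lies in the identity component $\mathbb{G}_m(k_v)=\F_{p^2}^\times$; as $E_1(K_v)$ is pro-$p$ with $p\neq 13$, the point $\bar P$ still has order $13$, forcing $13\mid p^2-1$, i.e. $p\equiv\pm1\pmod{13}$. But then $\Frob_p$ is trivial in $\Gal(\Q(\zeta_{13})^+/\Q)\cong(\Z/13)^\times/\{\pm1\}$ and again fixes $C$. In both cases $\iota(C)=C$, the desired contradiction, so $v$ cannot be inert.

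The step I expect to be the main obstacle is the moduli bookkeeping at the cusps: justifying cleanly that multiplicative reduction pushes $\tilde x$ into the cuspidal locus, and that the non-rational ($1$-gon) cusps are exactly the points where $P$ specializes into the identity component $\mathbb{G}_m$ while retaining order $13$. Once these moduli facts are pinned down, together with Lemmas~\ref{lem_hyp} and \ref{lem_mult_red} and the injectivity of reduction on cusps, the remainder of the argument is formal.
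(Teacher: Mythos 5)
Your proof is correct, and there is no circularity (Lemma \ref{lem_hyp}, the bijective reduction of cusps, and the fact that $\iota$ fixes no cusp all precede this lemma in the paper), but it takes a genuinely different and heavier route than the paper's. The paper never invokes the hyperelliptic involution here: after noting, as you do, that $\widetilde{x}=\overline{C}$ for some cusp $C\in X(\Q(\zeta_{13})^+)$, it simply analyzes which kind of cusp $C$ can be. If $C$ is a $13$-gon cusp, it is $\Q$-rational and $\overline{C}$ is automatically $\F_p$-rational; if $C$ is a $1$-gon cusp, then $P$ specializes into the identity component $\mathbb{G}_m$, whence $13\mid\#\mathbb{G}_m(k_v)$, which divides $p^2-1$, so $p\equiv\pm1\pmod{13}$, $p$ splits completely in $\Q(\zeta_{13})^+$, and every reduced cusp is $\F_p$-rational. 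Note that this is exactly the sub-case analysis you run inside your inert-case contradiction, and it is valid \emph{without} assuming $v$ inert --- so it already proves the lemma outright, making your detour through ``$v$ is not inert'' sound but superfluous for this statement. What the detour buys is a stronger by-product: reducing $\iota(x)=x^\sigma$ modulo $v$ excludes inert primes (and the same computation, with trivial Frobenius, excludes ramified ones), which is essentially Proposition \ref{prop1}; notably you obtain this without the rank computation of Lemma \ref{rnk_J13} and the divisor-class/degree-two-function argument the paper uses there, because you exploit the pointwise relation $\iota(x)=x^\sigma$ rather than only an identity of divisor classes on the special fiber. Two minor points: existence of the cusp $C$ with $\overline{C}=\widetilde{x}$ needs surjectivity of cusp reduction, not just the injectivity you cite (the bijectivity stated in the paper covers both); and your identity $\iota(C)=C^{\Frob_p}$ should be read as an identity of reduced cusps, $\overline{\iota(C)}=\overline{C}^{\Frob_v}$, upgraded to cusps in characteristic zero via injectivity, which is indeed how you use it.
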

\begin{proof}
Denote by $\widetilde{x}$ the reduction od $x$ mod $v$ and denote by $\overline{C}$ the reduction of $C\in X(\Q(\zeta_{13})^+)$ mod $v'$. First note that $\widetilde{x}=\overline{C}$, for some cusp $C\in X(\Q(\zeta_{13})^+)$, follows from the fact that all the cusps of $X$ are defined over $\Q(\zeta_{13})^+$ and that the cusps in $X(\Q(\zeta_{13})^+)$ reduce bijectively mod $v'$ to the cusps of $X(\overline {\F_p})$.

We now divide the proof in two cases: when $p\equiv \pm 1 \pmod{13}$ and when $p \not\equiv \pm 1 \pmod {13}$.

\noindent\textsc{Case 1: $p \not\equiv \pm 1 \pmod {13}$}\\
We claim that $\widetilde{x}=\overline C$ represents a N\'eron $13$-gon, from which it follows that $C$ represents a N\'eron $13$-gon; thus $C$ is defined over $\Q$ and $\overline{C}$ is $\F_p$-rational.

Suppose the opposite - that $\widetilde{x}=\overline{C}$ represents a N\'eron $1$-gon. Hence $P$ specializes to the identity component of the special fiber of the N\'eron model of $E$ at $v$. Since $\widetilde{P}$ is of order $13$ and $E$ has, by Lemma \ref{lem_mult_red}, split multiplicative reduction at $v$, it follows that $13$ divides the order of the multiplicative group $\mathbb G_m$ over $k_v$, from which it follows that that $13$ divides $p^2-1$, which is a contradiction with our assumption.

\noindent\textsc{Case 2: $p \equiv \pm 1 \pmod {13}$}\\
Since $p \equiv \pm 1 \pmod {13}$, it follows that $p$ splits completely in $\Q(\zeta_{13})^+$, the field over which all the cusps of $X$ are defined. Hence it follows that for every cusp $C$ of $X$, $\overline{C}$ is $\F_p$-rational.
\end{proof}
\begin{remark}
Note that the case $K\subseteq \Q(\zeta_{13})^+$ (i.e. $K=\Q(\sqrt{13})$) is not possible, since there exist no elliptic curves with $C_{13}$ torsion over $\Q(\sqrt{13})$ by \cite[Theorem 3]{kamnaj}.
\end{remark}

\section{Proof of Theorem \ref{main_tm}}

\begin{proposition}
Let $E_t$ be an elliptic curve over a quadratic field $K$ with $E_t(K)_{tors}\simeq C_{13}$. Let $v$ be a prime of $K$ over a rational prime $p$ such that $13$ divides $c_{v}$. Then $p$ splits in $K$.
\label{prop1}
\end{proposition}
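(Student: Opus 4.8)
The plan is to exclude the cases where $p$ is inert or ramified, using the hyperelliptic relation of Lemma~\ref{lem_hyp} together with the fact, supplied by Lemma~\ref{rat_cusps}, that $x$ reduces to an $\F_p$-rational cusp. Write $x=(E_t,P)\in X(K)$; by Lemma~\ref{lem_mult_red} the reduction at $v$ is split multiplicative, and I first treat the main case $v\nmid 13$, so that Lemma~\ref{rat_cusps} applies. Let $\widetilde{x}$ be the reduction of $x$ modulo $v$, a point of $\widetilde{X}(k_v)$. By Lemma~\ref{rat_cusps}, $\widetilde{x}=\overline{C}$ for a cusp $C\in X(\Q(\zeta_{13})^+)$ and $\widetilde{x}$ is $\F_p$-rational; in particular $\widetilde{x}\in\widetilde{X}(\F_p)$.

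Suppose, for contradiction, that $p$ does not split, i.e.\ $p$ is inert or ramified in $K$. Then there is a unique prime $v$ of $K$ above $p$, so the generator $\sigma$ of $\Gal(K/\Q)$ fixes $v$ and lies in the decomposition group at $v$; it therefore induces an element $\overline{\sigma}\in\Gal(k_v/\F_p)$. Reducing the identity $\iota(x)=x^\sigma$ of Lemma~\ref{lem_hyp} modulo $v$, and using that $\iota$ is defined over $\Q$ and hence commutes with reduction, I get $\iota(\widetilde{x})=\widetilde{x^\sigma}=\overline{\sigma}(\widetilde{x})$. But $\overline{\sigma}$ fixes every $\F_p$-rational point, so $\overline{\sigma}(\widetilde{x})=\widetilde{x}$, and therefore $\iota(\widetilde{x})=\widetilde{x}$: the cusp $\widetilde{x}$ is fixed by $\iota$.

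To finish I would contradict this last conclusion. Since the fixed points of $\iota$ lie outside the cusps, $\iota$ permutes the twelve cusps of $X$ with no fixed point. As $\iota$ is defined over $\Q$ and, for $v\nmid 13$, the cusps of $X$ reduce bijectively onto the cusps of $\widetilde{X}$, the permutation induced by $\iota$ on the cusps of $\widetilde{X}$ corresponds, under this bijection, to the fixed-point-free permutation induced by $\iota$ on the cusps of $X$, and is therefore itself fixed-point-free. Hence no cusp of $\widetilde{X}$ is fixed by $\iota$, contradicting $\iota(\widetilde{x})=\widetilde{x}$. This eliminates the inert and ramified cases, forcing $p$ to split.

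I expect the genuine content to be the observation that a non-split $p$ puts $\sigma$ in the decomposition group, so that $\sigma$ acts on the reduction through $\Gal(k_v/\F_p)$ and thus trivially on the $\F_p$-rational point $\widetilde{x}$; this is precisely where the hypothesis $13\mid c_v$ is used, via Lemma~\ref{rat_cusps}, which forces $\widetilde{x}$ to be a rational cusp. The remaining obstacle is the excluded prime $p=13$, where $X$ has bad reduction and neither Lemma~\ref{rat_cusps} nor the bijective reduction of cusps is available; I would dispatch $v\mid 13$ by a separate argument, for instance by analysing the Tate parametrization at $v$ to show that a point of order $13$ together with $13\mid c_v$ again forces $13$ to split.
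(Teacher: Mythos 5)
Your Case 1 argument (for $v\nmid 13$) is correct, and it takes a genuinely different and more economical route than the paper's. The paper shows the divisor class $[x+x^\sigma-2C]$ vanishes, which requires Lemma~\ref{rnk_J13} (the rank-$0$ computation over $\Q(\zeta_{13})^+$) together with injectivity of reduction on $J(\Q(\zeta_{13})^+)_{tors}$, and then identifies the resulting degree-$2$ function with the hyperelliptic map to conclude that $\iota$ fixes the cusp $C$. You instead reduce the relation $\iota(x)=x^\sigma$ of Lemma~\ref{lem_hyp} modulo $v$: when $p$ is inert or ramified, $\sigma$ lies in the decomposition group at $v$, acts on $k_v$, and fixes the $\F_p$-rational point $\widetilde{x}$ supplied by Lemma~\ref{rat_cusps}, giving $\iota(\widetilde{x})=\widetilde{x}$; since $\iota$ is defined over $\Q$, commutes with reduction at a prime of good reduction, and reduction is a bijection on cusps, this lifts to a cusp of $X$ fixed by $\iota$, which is impossible. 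This bypasses the Jacobian entirely — the rank computation and the torsion-injectivity statement are simply not needed — and reaches the same contradiction. That part of your proposal is a real simplification.

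However, Case 2 ($v\mid 13$) is a genuine gap: you do not prove it, and the repair you sketch cannot work. No purely local analysis of the Tate parametrization at $v$ can force $13$ to split in $K$, because there is no local obstruction at all: over \emph{any} quadratic extension $K_v$ of $\Q_{13}$, ramified or unramified, the Tate curve with parameter $q=13^{13}$ has $E_q(K_v)\simeq K_v^*/q^{\Z}$, the class of $13$ in this group has exact order $13$, and $c_v=v(q)$ is divisible by $13$. (Note $\zeta_{13}\notin K_v$, since $\Q_{13}(\zeta_{13})/\Q_{13}$ is totally ramified of degree $12$, so any $K_v$-rational point of order $13$ must arise from a $13$th root of a power of $q$, exactly as in this example.) What rules out the inert and ramified cases is global, not local: the paper uses the explicit parametrization \eqref{jed_ek}--\eqref{jed_s} of all such curves, checks that multiplicative reduction at a prime above $13$ forces $t\equiv 0,1\pmod{13}$ (with $v_{13}(t)\geq 0$) or $v_{13}(t)<0$, and observes that in each of these cases the radicand $t^6-2t^5+t^4-2t^3+6t^2-4t+1$ defining $K=\Q(s)$ is a nonzero square modulo $13$, whence $13$ splits. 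Some global input of this kind is unavoidable, so as it stands your proof establishes the proposition only for $v\nmid 13$.
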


\begin{proof} We split the proof into two cases, when $v$ divides $13$ and when it does not.

\noindent\textsc{Case 1: $v$ does not divide $13$}\\
Let $x$ be a non-cuspidal point on $X(K)$ and let $v'$ be a prime of $\Q(\zeta_{13})^+$. Denote by $\widetilde{y}$ the reduction of a $y\in X(K)$ mod $v$ and denote by $\overline y$ the reduction of a $y\in X(\Q(\zeta_{13})^+)$ mod $v'$.

Note that $X(\Q)$ consists purely of cusps, so $x$ is not defined over $\Q$. Suppose $p$ is inert or ramified, i.e. $p=v$ or $p=v^2$. Let $\widetilde{x}=\overline{C}$ and $\widetilde{x^\sigma}=\overline{C_\sigma}$, for some cusps $C$ and $C_\sigma$; $\overline{C}$ and $\overline{C_\sigma}$ are $\F_p$-rational by Lemma \ref{rat_cusps}.

Recall that $k_v\simeq \F_p$ if $p$ is split or ramified and $k_v\simeq \F_{p^2}$ if $p$ is inert and that the generator $\Frob v$ of $\Gal(k_v/\F_p)$ is non-trivial if and only if $p$ is inert. We have that, for a general $x\in K$, $\widetilde{x^\sigma}=\widetilde{x}^{\Frob v}\neq \widetilde{x}$ if $p$ is inert, $\widetilde{x^\sigma}\neq\widetilde{x}^{\Frob v}= \widetilde{x}$ if $p$ splits and $\widetilde{x^\sigma}=\widetilde{x}^{\Frob v}= \widetilde{x}$ if $p$ is ramified.
If $p$ is inert or ramified, it follows that
\begin{equation}\label{frob}
\overline{C_\sigma}=\widetilde{x^\sigma}=\widetilde{x}^{\Frob v}=\overline{C}^{\Frob v}=\overline{C}.
\end{equation}
It follows that $[\widetilde{x}+\widetilde{x^\sigma}-2\overline{C}]=0$. Since $[x+x^\sigma-2C]$ is a $\Q(\zeta_{13})^+$-rational divisor class, and hence a torsion point by Lemma \ref{rnk_J13}, injectivity of reduction mod $v'$ on $J(\Q(\zeta_{13})^+)_{tors}$ implies that $[x+x^\sigma-2C]=0$. Thus $x+x^\sigma-2C$ is a divisor of a rational function $g$, and since $x,x^\sigma \neq C$, $g$ is of degree $2$. Since the hyperelliptic map is unique (up to an automorphism of $\PP^1$), it follows that $g:X\rightarrow X/\diam{\iota}\simeq \PP^1$ is the same as quotienting out by the hyperelliptic involution $\iota$. Thus $\iota$ permutes the zeros and permutes the poles of $g$, from which it follows that $C$ is fixed by $\iota$, which we know is not true.

\noindent\textsc{Case 2: $v$ divides $13$}\\
As every elliptic curve $E_t$ with $C_{13}$ torsion over a quadratic fields is of the form given in \eqref{jed_ek} and \eqref{jed_ek2}, it is clear that the reduction type of $E$ over a prime $v$ over 13 depends only on the value of $t$ mod $13$, if $v_{13}(t)\geq0$. An easy computation shows that $E_t$ has multiplicative reduction only if $v_{13}(t)\geq 0$ and $t\equiv 0,1 \pmod{13}$ or if $v_{13}(t)<0$. In all these cases, $13$ splits in $\Q(s)$.
\end{proof}

\begin{proposition}
\label{prop_gl}
Let $E$ be an elliptic curve over a quadratic field $K$ with $E(K)_{tors}\simeq C_{13}$. Let $v$ be a prime over $p$ such that $13$ divides $c_{v}$ and $\sigma$ the generator of $\Gal(K/\Q)$. Then $v\neq v^\sigma$ and $c_v(E)=c_{v^\sigma}(E)$.
\end{proposition}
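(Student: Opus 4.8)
The plan is to prove the two claims in sequence, leveraging Proposition \ref{prop1} and the diamond-involution structure. First I would establish $v \neq v^\sigma$. By Proposition \ref{prop1}, since $13 \mid c_v$, the rational prime $p$ below $v$ splits in $K$. Splitting means $p\mathcal{O}_K = v \cdot v^\sigma$ with $v \neq v^\sigma$, so this half is essentially immediate — the nontrivial content lies entirely in Proposition \ref{prop1}, which forces splitting precisely to rule out the inert and ramified cases where $v = v^\sigma$ could occur.

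The substantive part is showing $c_v(E) = c_{v^\sigma}(E)$. Here I would exploit the relationship $E \cong E^\sigma$ over $K$ furnished by Lemma \ref{lem_hyp}: since $x = (E,P) \in X(K)$ is non-cuspidal, we have $\iota(x) = x^\sigma$, and in particular $E$ is isomorphic over $K$ to its Galois conjugate $E^\sigma$. The key observation is that applying $\sigma$ to the completion $K_v$ gives an isomorphism $K_v \xrightarrow{\sim} K_{v^\sigma}$ of local fields (this is where $v \neq v^\sigma$ matters: the two completions are genuinely distinct but abstractly identified via $\sigma$), under which $E/K_v$ corresponds to $E^\sigma/K_{v^\sigma}$. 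Since the Tamagawa number is an isomorphism invariant of the pair (curve, local field), this yields $c_v(E) = c_{v^\sigma}(E^\sigma)$. Finally, combining with the $K$-isomorphism $E \cong E^\sigma$ from Lemma \ref{lem_hyp}, which gives $c_{v^\sigma}(E^\sigma) = c_{v^\sigma}(E)$ since isomorphic curves over the same field have equal Tamagawa numbers at every prime, I would conclude $c_v(E) = c_{v^\sigma}(E)$.

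The main obstacle I anticipate is making the chain of identifications precise and in the correct order, specifically keeping track of which curve lives over which local field. The Galois action $\sigma$ relates $E$ over $K_v$ to $E^\sigma$ over $K_{v^\sigma}$ — it does \emph{not} directly compare $E$ at $v$ with $E$ at $v^\sigma$. One must insert the $K$-isomorphism $E \cong E^\sigma$ from Lemma \ref{lem_hyp} as a separate step to bridge the gap, and it is worth stating explicitly that a $K$-isomorphism of elliptic curves induces isomorphisms of the local points $E(K_w) \cong E^\sigma(K_w)$ respecting the N\'eron model filtration, hence preserving each $c_w$. I would be careful to phrase the argument so that the conjugation isomorphism and the Lemma \ref{lem_hyp} isomorphism are composed in the right sense, rather than conflating the two distinct reasons the curves are isomorphic.

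A cleaner alternative formulation, which I might prefer to reduce bookkeeping, is to note that $\sigma$ itself is a field automorphism of $K$ sending $v$ to $v^\sigma$, and that any elliptic curve $E/K$ satisfies $c_v(E) = c_{v^\sigma}(E^\sigma)$ functorially simply because $\sigma$ carries the N\'eron model of $E$ at $v$ to that of $E^\sigma$ at $v^\sigma$ isomorphically. Then Lemma \ref{lem_hyp}'s conclusion $E \cong_K E^\sigma$ immediately replaces $E^\sigma$ by $E$ on the right, giving the result. Either way, the proof is short once Proposition \ref{prop1} and Lemma \ref{lem_hyp} are in hand; the genuine mathematical work has already been done in those two results.
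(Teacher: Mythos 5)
Your proposal is correct and follows exactly the paper's argument: Proposition \ref{prop1} gives $v\neq v^\sigma$, and the paper's own chain $c_v(E)=c_{v^\sigma}(E^\sigma)=c_{v^\sigma}(E)$ is precisely your combination of the Galois-conjugation identity with the $K$-isomorphism $E\simeq E^\sigma$ from Lemma \ref{lem_hyp}. Your write-up simply makes explicit the bookkeeping (which curve lives over which completion) that the paper leaves implicit.
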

\begin{proof}
By Proposition \ref{prop1}, $v\neq v^\sigma$. By Lemma \ref{lem_hyp} it follows that $E^\sigma \simeq E$, and hence $c_v(E)=c_{v^\sigma}(E^\sigma )=c_{v^\sigma}(E).$
\end{proof}
From Proposition \ref{prop_gl}, it is clear that $v_{13}(\prod_{v}c_v)$ is even. The fact that $v_{13}(c(E))>0$ follows from \cite[Proposition 1.3.]{lor}, proving Theorem \ref{main_tm}.

\section{The elliptic curve with smallest $c_E$}

Since we have $169|c_{E_t}$, it is natural to ask how many curves $E_t$ with $C_{13}$ torsion over quadratic fields satisfy $v_{13}(c_{E_t})=2$. In \cite[Example 5.3.5]{kru} Krumm found a single curve satisfying $v_{13}(c_{E_t})=2$. In fact, this curve satisfies $c_{E_t}=169$. We prove that this curve is the unique curve having this property.

\begin{theorem}
\label{zavrsni_tm}
The elliptic curve
\begin{equation}\label{jed_ek3}
E_2:y^2 +xy + y = x^3 - x^2 + \frac{-541 +131\sqrt{17}}{2}x + 3624-879\sqrt{17}
\end{equation}
is the only elliptic curve $E$ over any quadratic field with $C_{13}$ torsion such that $v_{13}(c_E)=2$; for all other such curves $13^4|c_E$.
\end{theorem}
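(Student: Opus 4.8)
The plan is to combine the structural results already established with an explicit but finite search over the parameter $t$. By Proposition~\ref{prop_gl}, whenever a prime $v$ contributes $13 \mid c_v$, its conjugate $v^\sigma$ contributes the same valuation, so $v_{13}(c_E) = 2$ forces there to be exactly one conjugate pair $\{v, v^\sigma\}$ of primes dividing $c_E$ with $v_{13}(c_v) = 1$, and no other primes contributing any power of $13$. In particular, at that single pair we have $v_{13}(c_v) = 1$ exactly, and we must rule out all larger contributions everywhere else. The first step is therefore to translate ``$v_{13}(c_v) = 1$'' into an arithmetic condition on $t$: by Lemma~\ref{lem_mult_red}, $E$ has split multiplicative reduction at $v$, so $c_v = v(\Delta_{E,v})$ (the valuation of the minimal discriminant), and $v_{13}(c_v) = 1$ means $v(\Delta) \in \{13, 26, \ldots\}$ but $v(\Delta)$ has $13$-adic valuation exactly $1$, i.e. $v(\Delta)/13$ is coprime to $13$. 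One should express $\Delta_{E_t}$ as an explicit function of $t$ and $s$ using \eqref{jed_ek}--\eqref{jed_s}.

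Next I would bound which primes $p$ can occur. By Proposition~\ref{prop1} any contributing $p$ splits in $K = \Q(s)$, and the reduction must be of N\'eron $13$-gon type reducing from a rational cusp (Lemma~\ref{rat_cusps}); combined with the analysis in Case~1 of Proposition~\ref{prop1}, the relevant primes $p$ are constrained, and the argument in Case~2 handles $p = 13$ separately. The key reduction is that the condition $13 \mid c_{E_t}$ pins $t$ modulo small primes (this is exactly how Krumm obtained $169 \mid c_{E_t}$ by taking $v \mid 2$). I would make this uniform: identify, for each small split prime $p$, the residues of $t$ mod $p$ that force $13 \mid c_v$, and show that for a ``generic'' $t$ several independent such primes each contribute a factor $13^2$, giving $13^4 \mid c_E$. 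The curves escaping $13^4 \mid c_E$ are then those $t$ avoiding all but one such congruence, which should cut the candidate set of $t$ down to a finite, explicitly enumerable list.

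The final step is to verify that this finite list yields exactly the one curve $E_2$ in \eqref{jed_ek3}. For each surviving candidate value of $t$, one computes $c_{E_t} = \prod_v c_v$ directly (the reduction is bad only at finitely many primes, all determined by the numerator and denominator of $\Delta_{E_t}$), checks $v_{13}(c_{E_t})$, and confirms that only $t = 2$ (over $K = \Q(\sqrt{17})$, since $s^2 = 64 - 128 + 16 - 16 + 24 - 8 + 1 = -47$ must be replaced by the actual value at $t=2$, which gives $\Q(\sqrt{17})$) produces $v_{13}(c_E) = 2$, and indeed $c_{E_2} = 169$. This is a finite Magma \cite{magma} computation.

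The main obstacle I expect is the reduction from ``$v_{13}(c_E) = 2$'' to a genuinely finite set of $t$: a priori there could be infinitely many rational $t$ for which only one conjugate pair $\{v, v^\sigma\}$ contributes, and one must prove that for all but finitely many $t$ at least two independent split primes each force a factor of $13^2$. The cleanest route is probably to show that the congruence conditions on $t$ coming from distinct small primes are simultaneously satisfiable by only finitely many $t$ of bounded height, using that $t$ lies in $\Q$ and that $E_t$ is defined over a quadratic field (so $s \in \Q(\sqrt{d})$ constrains $t$ via \eqref{jed_s}); I would lean on the explicit parametrization and a direct search rather than attempting a fully closed-form argument.
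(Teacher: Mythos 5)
Your setup (the pairing $c_\wp = c_{\wp^\sigma}$ from Proposition~\ref{prop_gl}, so $v_{13}(c_E)=2$ means exactly one conjugate pair contributes, with the pair above $2$ already forced by Krumm/Lorenzini) matches the paper. But the heart of the theorem is the step you yourself flag as ``the main obstacle'': proving that only finitely many $t\in\Q$ admit just one contributing pair. Your proposed resolutions --- congruence conditions on $t$ modulo finitely many small split primes, plus a bounded-height search verified in Magma --- cannot close this gap. Avoiding any finite set of congruence conditions still leaves infinitely many $t$ by the Chinese Remainder Theorem, and a search over $t$ of bounded height proves nothing about the infinitely many remaining rational values; there is no a priori height bound in sight. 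So as written the argument does not reduce the theorem to a finite computation.

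The idea you are missing is valuation-theoretic, not congruential, and it gives a closed-form answer with no search at all. One computes
$$\Delta(E_t)=\frac{t^{13}(t-1)^{13}(t^3-4t^2+t+1)f(t,s)}{2},$$
and the exponent $13$ on $t$ and on $t-1$ is the whole point: for \emph{every} odd prime $\wp$ of $K$ with $m:=|v_\wp(t)|>0$ or $|v_\wp(t-1)|>0$ (passing to the integral model in $z=t^{-1}$ when $v_\wp(t)<0$), the curve $E_t$ has split multiplicative reduction of type $I_{13m}$ at $\wp$, hence $13\mid c_\wp$; by Proposition~\ref{prop_gl} the conjugate prime $\wp^\sigma\neq\wp$ contributes equally, so each such rational prime forces a factor $13^2$ in $c_E$ \emph{in addition} to the $13^2$ coming from the split primes above $2$. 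Therefore $v_{13}(c_E)=2$ forces the numerators and denominators of $t$ and $t-1$ to be supported only at $2$: writing $t=r/s$ in lowest terms, $rs(r-s)=\pm 2^k$. This S-unit--type condition has exactly the solutions $t=-1,\tfrac12,2$, and all three give the single curve $E_2$ over $\Q(\sqrt{17})$. (Minor arithmetic note: at $t=2$ the radicand in \eqref{jed_s} is $64-64+16-16+24-8+1=17$, not $-47$; your slip came from evaluating $2t^5$ as $128$.) So the verdict is: same skeleton at the start, but the decisive finiteness mechanism is absent from your proposal, and the replacement you suggest would not work.
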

\begin{proof}
We will show that for all curves not isomorphic to $E_2$ with torsion $C_{13}$ over quadratic fields, $13^4|c_E$. Let $E_t$ be an elliptic curve with $C_{13}$ torsion over a quadratic field, then $E_t$ is of the form given by \eqref{jed_ek} and \eqref{jed_ek2}, for some $t \in \Q$ and where $s$ is given in \eqref{jed_s}. Then
$$\Delta(E_t)=\frac{t^{13}(t-1)^{13}(t^3-4t^2+t+1)f(t,s)}{2},$$
where $f(t,s)$ is a degree $42$ polynomial, and
\small{$$j(E_t)=\frac{(t^2 - t + 1)^3(t^{12} - 9t^{11} + 29t^{10} - 40t^9 + 22t^8 - 16t^7 + 40t^6 - 22t^5 - 23t^4 + 25t^3 - 4t^2 - 3t + 1)^3}{t^{13}(t-1)^{13}(t^3-4t^2+t+1)}.$$}

By \cite[Corollary 3.4]{lor}, if $\wp$ is a prime over 2, then $13|c_\wp$ and it follows from Proposition \ref{prop1} (or \cite[Theorem 2.6.9.]{kru}) that $2$ splits in $K$.

Let $\wp$ be a prime of $K$ not dividing 2. Suppose that $m:=v_\wp(t)>0$. Then it follows that $E/K$ has split multiplicative reduction modulo $\wp$ of type $I_{13m}$ (see \cite[Section 2.2]{lor} for details how to check this explicitly). This implies that $v_{13}(c_\wp(E))>0$ by \cite[Theorem VII.6.1]{sil}. By Proposition \ref{prop_gl}, it follows that $\wp\neq \wp^\sigma$ and that $E/K$ also has split multiplicative reduction modulo $\wp^\sigma$ and that $c_\wp(E)= c_{\wp^\sigma}(E)$.

Now suppose that $m:=-v_\wp(t)>0$; then \eqref{jed_ek} is not an integral model of $E_t$ at $\wp$, but the equation with invariants {\small
$$\begin{aligned}
a_1 &=\frac{(s - 1)z^7 + (-3s + 5)z^6 + (2s - 9)z^5 + (s + 5)z^4 + (-s + 2)z^3 -
        3z^2 - 2z + 1}{2},\\
a_2 &=\frac{z^8(z-1)^2((s - 1)z^8 + (-4s + 6)z^7 + (5s - 14)z^6 + 13z^5 + (-2s - 1)z^4 +
        (-s - 2)z^3 - 4z^2 + z + 1)}{2},\\
a_3 &=\frac{z^3(z-1)^2((s - 1)z^8 + (-4s + 6)z^7 + (5s - 14)z^6 + 13z^5 + (-2s - 1)z^4 +
        (-s - 2)z^3 - 4z^2 + z + 1)}{2},
\end{aligned}$$}
with $z=t^{-1}$ is integral in $\wp$. We compute that
$$\Delta(E_t)=\frac{z^{13}(z-1)^{13}(z^3+z^2-4z+1)g(z,s)}{2},$$
where $g(z,s)$ is a polynomial of degree 42. Again we obtain, using the same arguments as before, that $E_t$ has split multiplicative reduction of type $I_{13m}$ at both $\wp$ and $\wp^\sigma$.

The same argument as before shows that if $v_\wp(t-1)\neq 0$, then $c_\wp(E)= c_{\wp^\sigma}(E)=13m$ for some positive integer $m$, and that $\wp \neq \wp^\sigma$.

Thus if we want $v_{13}(c_E)=2$, the primes above 2 are the only primes $\wp $ such that $13|c_\wp$, which implies that the primes $\wp$ above 2 are the only primes $\wp$ such that $v_\wp(t)\neq 0$ or $v_\wp(t-1)\neq 0$. We see that the only possibilities are $t=-1,\frac 1 2$ and $2$. All three values give the same curve $E_2$.
\end{proof}

\begin{remark}
We expect there to be infinitely many nonisomorphic elliptic curves $E_t$ with $C_{13}$ torsion over quadratic fields such that $v_{13}(c_{E_t})=4.$

To see this, notice that $v_{13}(c_{E_t})=4$ if
\begin{itemize}
  \item[1)] If we put $t=r/s$, then $rs(r-s)$ has exactly $2$ prime divisors,
  \item[2)] For all primes $\wp$ of $\mathcal O_K$,  $v_{\wp}\left((t^3-4t^2+t+1)f(t,s)\right)\neq13 k,$ for $k\in \Z -\{0\}.$
\end{itemize}

Condition 1) is true for $\{|r|,|s|,|r-s|\}=\{1,2^p-1, 2^p\}$, for $p\neq 13$ such that $2^p-1$ is prime, i.e a Mersenne prime, for $\{|r|,|s|, |r-s|\}=\{1, 2^k, 2^k+1\}$, where $2^k+1$ is a Fermat prime, or for
$\{|r|,|s|, |r-s|\}=\{1,8,9\}$. Conjecturally, there exist infinitely many Mersenne primes (and finitely many Fermat primes).

 Heuristically, we expect condition $2)$ to be satisfied very often as there is no reason to expect the appearance of $13k$-th powers in the prime factorization of the numerator or denominator of $(t^3-4t^2+t+1)f(t,s)$. Together with the (conjectural) infinitude of Mersenne primes, this should heuristically imply that there exists infinitely many values $t$ such that $v_{13}(c_{E_t})=4$.
\end{remark}

\begin{acknowledgments}
We are grateful to Dino Lorenzini for bringing Krumm's conjecture to our attention and for many helpful comments and suggestions. We thank Peter Bruin and Matija Kazalicki for helpful conversations and the anonymous referee for many valuable comments and suggestions that greatly improved the paper.
\end{acknowledgments}

\end{document}